\newtheorem{theorem}{Theorem}
\newtheorem{lemma}[theorem]{Lemma}
\newtheorem{definition}[theorem]{Definition}
\def\RR{\mathbbm{R}}
\def\NN{\mathbbm{N}}
\def\Id{\mathbbm{1}}
\DeclareMathOperator{\id}{id}
\DeclareMathOperator{\conv}{conv}
\DeclareMathOperator{\Prob}{Prob}
\DeclareMathOperator{\Ext}{Ext}
\DeclareMathOperator{\Sym}{Sym}
\newcommand{\tmin}{\dot{\otimes}}
\newcommand{\tmax}{\hat{\otimes}}
\def\S{\mathcal{S}}
\def\NN{\mathbb{N}}
\def\RR{\mathbb{R}}
\begin{document}
\title{The polarization hierarchy for polynomial optimization over convex bodies,\\with applications to nonnegative matrix rank}

\author{Martin Plávala}
\affiliation{Naturwissenschaftlich-Technische Fakult\"{a}t, Universit\"{a}t Siegen, Walter-Flex-Stra\ss e 3, 57068 Siegen, Germany}
\author{Laurens T.\ Ligthart}
\author{David Gross}
\affiliation{Institute for Theoretical Physics, University of Cologne, Germany }
\date{June 13th, 2024}

\begin{abstract}
	We construct a convergent family of outer approximations for the problem of optimizing
	polynomial functions over convex bodies 
	subject to polynomial constraints.
	This is achieved by generalizing the \emph{polarization hierarchy}, which has previously been introduced for the study of polynomial optimization problems over state spaces of $C^*$-algebras, to convex cones in finite dimensions.
	If the convex bodies can be characterized by linear or semidefinite programs, then the same is true for our hierarchy.
	Convergence is proven by relating the problem to a certain \emph{de Finetti theorem} for \emph{general probabilistic theories}, which are studied as possible generalizations of quantum mechanics.
	We apply the method to the problem of nonnegative matrix factorization, and in particular to the \emph{nested rectangles problem}.
	A numerical implementation of the third level of the hierarchy is shown to give rise to a very tight approximation for this problem.
\end{abstract}

\maketitle

\section{Introduction}
Solving practical linear optimization problems is in many cases rather straightforward: we can often use linear or semidefinite programming, or in general convex optimization \cite{boyd2004convex} to arrive to a numerical solution. Such methods are regularly used in quantum information to obtain numerical predictions for subsequent experiments \cite{cavalcanti2016quantum,tavakoli2023semidefinite,skrzypczyk2023semidefinite,mironowicz2023semi}. There are also many problems in quantum information and other fields that are non-linear, for example the separability of quantum states \cite{horodecki2009quantum,guhne2009entanglement}, transformation of quantum gates \cite{navascues2018resetting,quintino2019probabilistic,miyazaki2019complex,bavaresco2021strict}, but also determining nonnegative matrix rank \cite{cohen1993nonnegative,wang2012nonnegative} which was also recently used to analyze quantum communication \cite{heinosaari2020communication,heinosaari2024simple}.

In this paper we aim to provide a method to solve a general class of polynomial optimization problems by deriving a convergent hierarchy of convex relaxations, called the \emph{polarization hierarchy}. This hierarchy is inspired by a similarly named method for optimization of polynomials in states on (non-commutative) $C^*$-algebras constructed in Refs.~\cite{ligthart2022inflation, ligthart2023convergent}. More precisely, if $K_1, \ldots, K_m$ are compact convex subsets of finite dimensional vector spaces, we optimize a multivariate polynomial objective function $p: K_1 \times \ldots \times K_m \to \RR$ under a set of $q$ multivariate polynomial equality constraints given by $f: K_1 \times \ldots \times K_m \to \RR^q$, resulting in an optimization problem of the form
\begin{align}
\begin{split} \label{eq:opt-problem-intro}
p^* = \min_{\{ x_{i} \in K_{i} \}} \quad & p( \{x_{i} \}_{i=1}^m) \\
\text{s.t.} \quad & f(\{x_{i}\}_{i=1}^m)) = 0.
\end{split}
\end{align}
Such optimization problems in particular include many of the aforementioned non-linear problems in quantum information theory and in General Probabilistic Theories (GPTs) \cite{plavala2023general}. If the convex compact set is a polyhedron (e.g.~classical probability theory, or the no-signaling polytope), the polarization method provides a convergent linear programming (LP) hierarchy, while if it is spectrahedral (e.g.~quantum theory), it yields a convergent semidefinite programming (SDP) hierarchy. For more general cones, one can still write down a convergent hierarchy, though these might not be efficiently implementable.

As an example of an application that is not directly motivated by GPTs, let us consider the nonnegative matrix rank: given an $n \times m$ matrix $A$ with non-negative entries we say that the nonnegative matrix rank of $A$ is $k$ if there are matrices $L$ and $R$ with non-negative entries where $L$ is an $n \times k$ matrix and $R$ is a $k \times m$ matrix such that $A = LR$, and there is no smaller $k$ for which this is true. Thus one can state this problem as finding matrices $L$ and $R$ of given sizes and with positive entries such that
\begin{equation} \label{eq:intro-fLRA}
f(L,R) = LR - A = 0.
\end{equation}
Moreover, $A$ and also $L$ and $R$ can, without loss of generality, be considered to be left-stochastic matrices, which will turn out to be important since the set of left-stochastic matrices is compact and convex. It thus follows that the problem at hand can be formulated as follows: given two compact convex sets $K_A$, $K_B$, and an affine function $f: K_A \times K_B \to V$ where $V$ is a vector space, find $x_A \in K_A$ and $x_B \in K_B$ such that $f(x_A, x_B) = 0$. In this example of nonnegative matrix factorization, $K_A$ and $K_B$ are the respective sets of left-stochastic matrices and the function $f$ is given by \eqref{eq:intro-fLRA}. Since it is a feasibility problem, the objective function is simply given by $p = 0$.

In this paper we will show that such problems can be solved using a hierarchy of outer approximation which can be computed using convex optimization. To improve readability, we will focus on the case of two compact convex sets, $K_A$ and $K_B$, and quadratic constraints. The generalization to more sets and higher order polynomials will follow straightforwardly. We will then apply our hierarchies to the nested rectangles problem \cite{gribling2019lower} to show that already at low, finite level we obtain relatively fast convergence for this particular problem. 

\section{polarization hierarchy} \label{sec:polarization_hierarchy}
We will assume that all vector spaces are finite-dimensional. Let $K$ be a \emph{state space}, that is let $K$ be a compact convex subset of a finite-dimensional vector space. We will use $A(K)$ to denote the sets of affine functions on $K$, we will use $A(K)^+$ to denote the set of positive affine functions and we will use $A(K)^*$ to denote the dual vector space of $A(K)$. Note that there is a natural embedding of $K$ into $A(K)^*$ using the evaluation map, i.e., let $x \in K$ then we can define $\varphi_x \in A(K)^*$ as follows: let $f \in A(K)$, then $\varphi_x(f) = f(x)$. There is also a distinguished constant function $1_K \in A(K)$ defined as $1_K(x) = 1$ for all $x \in K$, see \cite{plavala2023general} for a detailed construction and proofs.

Let $K_A$ and $K_B$ be state spaces and let $V$ be a finite-dimensional vector space. We will use the shorthand $1_A = 1_{K_A}$ and $1_B = 1_{K_B}$ to simplify the notation. Our task is, given an affine function $f: K_A \times K_B \to V$, to determine whether there are $x_A \in K_A$ and $x_B \in K_B$ such that $f(x_A, x_B) = 0$. Since $f$ is affine, this includes also problems of the form $f(x_A, x_B) = a$ for any $a \in V$. Also note that affine functions $K_A \times K_B \to V$ are in one-to-one correspondence to linear maps $A(K_A)^* \otimes A(K_B)^* \to V$, we will thus consider $f$ to be a linear map $f: A(K_A)^* \otimes A(K_B)^* \to V$ \cite{ryan2002introduction}. We can thus rewrite the problem as the task of finding $x_A \in K_A$ and $x_B \in K_B$ such that $f(x_A \otimes x_B) = 0$.

Let $K_A \tmin K_B$ denote the set of all \emph{separable} states of $K_A$ and $K_B$, that is,
\begin{equation}
K_A \tmin K_B = \conv(\{ x_A \otimes x_B : x_A \in K_A, x_B \in K_B \}),
\end{equation}
we will also denote by $K_A \tmax K_B$ the maximal tensor product of $K_A$ and $K_B$ defined as
\begin{equation}
K_A \tmax K_B = \{ \varphi \in A(K_A)^* \otimes A(K_B)^* : \varphi(f_A \otimes f_B) \geq 0, \forall f_A \in A(K_A)^+, \forall f_B \in A(K_B)^+, \varphi(1_{K_A} \otimes 1_{K_B}) = 1 \}.
\end{equation}
The natural relaxation of our problem is to find $y_{AB} \in K_A \tmin K_B$ such that $f(y_{AB}) = 0$; one can solve this problem using the hierarchy developed in \cite{aubrun2022monogamy}. This is only a relaxation since $y_{AB} \in K_A \tmin K_B$ implies that $y_{AB} = \sum_i \lambda_i x_{i,A} \otimes x_{i,B}$ and we get $\sum_i \lambda_i f(x_{i,A} \otimes x_{i,B}) = 0$, but in general $f(x_{i,A} \otimes x_{i,B}) \neq 0$ in this case. We will use the polarization hierarchy \cite{ligthart2022inflation} to make this a reformulation of the problem rather than relaxation. To do so we will require a linear map $\Pi: V \otimes V \to W$, where $W$ is a yet unspecified vector space, such that $\Pi(a \otimes a + b \otimes b) = 0$ implies $a = b = 0$. Note that such a linear map always exists as we can always take $W = V \otimes V$ and $\Pi(a \otimes b) = a \otimes b$, but other choices are also possible. Then observe that $f(x_A \otimes x_B) = 0$ is equivalent to requiring that $\Pi(f(x_A \otimes x_B) \otimes f(x_A \otimes x_B)) = 0$ and we have
\begin{equation}
0 = \Pi(f(x_A \otimes x_B) \otimes f(x_A \otimes x_B)) = \Pi((f \otimes f)(x_A \otimes x_B \otimes x_A \otimes x_B)).
\end{equation}
This insight is the main building block necessary to formulate the polarization hierarchy. In order to do so, we will also need the concept of a bicompatible sequence, which is defined as follows:
\begin{definition}
Let $K_A$ and $K_B$ be state spaces. We say that $\{y_n\}_{n \in \NN}$ is a bicompatible sequence if $y_n \in K_A^{\tmax n} \tmax K_B^{\tmax n}$ such that $y_{n-1} = (1_{A_n} \otimes 1_{B_n})(y_n)$ where $1_{A_n}$, $1_{B_n}$ denote the distinguished functions that act as a \emph{partial trace} over the $n$\textsuperscript{th} copy of $K_A$, $K_B$, respectively, that is, $1_{A_n}(\otimes_{i=1}^n v_{i, A}) = 1_A(v_{n, A}) \otimes_{i=1}^{n-1} v_{i, A}$ for any $v_{i, A} \in A(K_A)^*$ and analogously for $1_{B_n}$, and
\begin{equation} \label{eq:symmetry}
y_n = (S_{\sigma_A} \otimes S_{\sigma_B})(y_n)
\end{equation}
where $S_{\sigma_A}$, $S_{\sigma_B}$ is the linear map that permutes the tensor factors of $K_A^{\tmax n}$, $K_B^{\tmax n}$, respectively.
\end{definition}

One can always construct bicompatible sequences of the form $y_n = \sum_i \lambda_i x_{A,i}^{\otimes n} \otimes x_{B,i}^{\otimes n}$ for some fixed $x_{A,i} \in K_A$, $x_{B,i}^{\otimes n} \in K_B$ and $\lambda_i \in [0,1]$, $\sum_i \lambda_i = 1$. We will need to prove that the converse also holds:
\begin{lemma} \label{lemma:structureYn}
Let $\{y_n\}_{n \in \NN}$ be a bicompatible sequence, then there is an essentially unique Borel probability measure $\mu \in \Prob(K_A \times K_B)$ such that for every $n\in\NN$,
\begin{equation}
y_n = \int_{K_A \times K_B} x_A^{\otimes n} \otimes x_B^{\otimes n} \dd \mu(x_A, x_B).
\end{equation}
\end{lemma}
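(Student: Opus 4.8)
The plan is to recognize this as a de Finetti-type theorem for general probabilistic theories and to prove it via a reduction to the classical Hausdorff moment problem, using the extremality of product states in the maximal tensor product. First I would observe that, by the symmetry condition \eqref{eq:symmetry} together with the compatibility condition $y_{n-1} = (1_{A_n} \otimes 1_{B_n})(y_n)$, the sequence $\{y_n\}$ behaves like the sequence of "moments" of a measure on $K_A \times K_B$: the object $x_A^{\otimes n} \otimes x_B^{\otimes n}$ plays the role of a monomial of degree $n$, and the partial-trace condition is exactly the consistency needed for a moment sequence. The target is then to show that any such consistent symmetric sequence is represented by a measure, and that this measure is unique.

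For the representation step, I would first prove uniqueness, which is the easier direction: since affine functions on $K_A$ separate points of $K_A$ (and likewise for $K_B$), products of such functions are dense (by Stone–Weierstrass, after identifying $A(K_A)$ with continuous functions via the embedding of $K_A$ into $A(K_A)^*$) in the continuous functions on the compact set $K_A \times K_B$. Hence the integrals $\int (f_A \otimes f_B)^{\otimes \text{stuff}}\, \dd\mu$, i.e.\ the numbers $y_n(f_A^{\otimes n} \otimes f_B^{\otimes n})$, determine $\mu$ on a dense subalgebra of $C(K_A \times K_B)$ and therefore determine $\mu$ uniquely by the Riesz representation theorem. For existence, the natural route is an abstract quantum-de-Finetti / Størmer-type argument: consider the inductive limit state defined by the $y_n$ on the (commutative, by the symmetrization) limit of $\bigotimes^n A(K_A)^* \otimes A(K_B)^*$, show it is a state on a commutative C$^*$-algebra whose spectrum is (a quotient of) $K_A \times K_B$, and apply Riesz–Markov to extract $\mu$. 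Alternatively, and perhaps more self-containedly, one can mimic the classical proof of the Hausdorff moment problem: use the positivity of $y_n$ under the maximal tensor product cone (the condition $\varphi(f_A \otimes f_B) \ge 0$) to build, for each $n$, a finitely supported approximating measure $\mu_n$, then extract a weak-$*$ convergent subsequence using compactness of $\Prob(K_A \times K_B)$, and check that the limit reproduces all the $y_n$ by the consistency relations.

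The main obstacle I anticipate is the existence half, specifically controlling the interplay between the \emph{two} maximal tensor products — the "spatial" one between the $A$ and $B$ systems and the "symmetric" one among the $n$ copies. The positivity conditions built into $K_A^{\tmax n} \tmax K_B^{\tmax n}$ are weaker than full positivity on the algebraic tensor product, so one must be careful that the functional one writes down is genuinely a state (nonnegative on all products of positive affine functions, normalized on $1_A^{\otimes n} \otimes 1_B^{\otimes n}$) and that passing to the commutative quotient does not lose the separable structure needed to land on $K_A \times K_B$ rather than on some larger set. A clean way to handle this is to treat the bipartite structure as a single system: set $K = K_A \tmax K_B$, note a bicompatible sequence gives in particular a symmetric consistent sequence $\tilde y_n \in K^{\tmax n}$, invoke a (single-system) de Finetti theorem for the maximal tensor product to get a measure $\nu$ on $K$, and then show that the extra separability constraints in the definition of bicompatibility force $\nu$ to be supported on the product states $\{x_A \otimes x_B\}$, which is a closed subset of $K$ homeomorphic to $K_A \times K_B$; pushing $\nu$ forward along this homeomorphism yields the desired $\mu$. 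I would isolate the single-system de Finetti statement as the technical core and prove it by the moment-problem argument sketched above.
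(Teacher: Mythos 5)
Your high-level architecture coincides with the paper's: both reinterpret the bicompatible sequence, via the reordering isomorphism $\S_n$, as a simultaneously exchangeable and consistent sequence over the single state space $K = K_A \tmax K_B$, and both invoke a de Finetti theorem for compact convex sets to obtain an essentially unique measure $\nu$ on $K_A \tmax K_B$ with $\S_n(y_n) = \int w_{AB}^{\otimes n}\,\dd\nu(w_{AB})$. The paper cites this single-system de Finetti theorem rather than reproving it by the moment-problem/Riesz route you sketch, and your Stone--Weierstrass argument for essential uniqueness is sound; none of this is where the difficulty lies.

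The gap is in your final step. You assert that ``the extra separability constraints in the definition of bicompatibility force $\nu$ to be supported on the product states'' and would push $\nu$ forward along the homeomorphism between product states and $K_A \times K_B$ --- but you give no argument for this support claim, and it is exactly the nontrivial content of the lemma beyond the standard de Finetti theorem. It cannot follow from any pointwise separability of the $y_n$, since $K_A^{\tmax n} \tmax K_B^{\tmax n}$ contains highly non-separable states; it is true only a posteriori, as a consequence of essential uniqueness combined with the identity below. The paper closes the gap with a concrete device absent from your sketch: invariance under \emph{independent} permutations $S_{\sigma_A} \otimes S_{\sigma_B}$ implies $(1_{A_i} \otimes 1_{B_j})(y_n) = y_{n-1}$ for \emph{any} pair of (not necessarily aligned) copies, whence
\begin{equation*}
y_n = \otimes_{i=1}^n (1_{A_{n+i}} \otimes 1_{B_{i}})(y_{2n}).
\end{equation*}
Substituting the de Finetti representation of $y_{2n}$ into the right-hand side, each surviving tensor slot sees only one marginal of one copy of $w_{AB}$, so that $y_n = \int (1_B(w_{AB}))^{\otimes n} \otimes (1_A(w_{AB}))^{\otimes n}\,\dd\nu(w_{AB})$, and the desired $\mu$ is simply the pushforward of $\nu$ under the continuous marginal map $T(w_{AB}) = (1_B(w_{AB}), 1_A(w_{AB}))$ into $K_A \times K_B$. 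This sidesteps the support question entirely. Without this pairing trick, or an equivalent argument supplying the missing step, your proof does not go through.
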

\begin{proof}
The result follows using the de Finetti theorem in the form as described in \cite[Remark 1.]{aubrun2022monogamy}. We provide an explicit proof that relies on the de Finetti theorem for compact convex sets \cite{barrett2009finetti}.

Assume that $\{y_n\}_{n \in \NN}$ is a bicompatible sequence, then $y_n \in K_A^{\tmax n} \tmax K_B^{\tmax n}$, but we can, up to an isomorphism that permutes the tensor factors, also treat $y_n$ as an element of $(K_A \tmax K_B)^{\tmax n}$. We will denote this isomorphism $\S_n$, that is, $\S_n: K_A^{\tmax n} \tmax K_B^{\tmax n} \to (K_A \tmax K_B)^{\tmax n}$ is just the reordering of the tensor factors defined as
\begin{equation}
\S_n((\otimes_{i=1}^n x_{i,A}) \otimes (\otimes_{i=1}^n x_{i,B}) ) = \otimes_{i=1}^n (x_{i,A} \otimes x_{i,B})
\end{equation}
for $x_{i,A} \in K_A$ and $y_{i,B} \in K_B$ for all $i \in \{1,\ldots,n\}$.

Observe now that $\{ \S_n(y_n) \}_{n \in \NN}$ is a compatible sequence over $K_A \tmax K_B$, that is, $\S_n(y_n)$ is invariant under permutations acting on the $n$ copies of $K_A \tmax K_B$ and $(1_{A_n} \otimes 1_{B_n})(\S_n(y_n)) = \S_{n-1}(y_{n-1})$, where we are using $1_{K_A \tmin K_B} = 1_{K_A \tmax K_B} = 1_A \otimes 1_B$. According to the de Finetti theorem for compact convex sets \cite{barrett2009finetti,aubrun2022monogamy} we thus have that there is an essentially unique Borel probability measure $\nu$ on $K_A \tmax K_B$ such that
\begin{equation} \label{eq:structureYn-deFinetti}
\S_n(y_n) = \int_{K_A \tmax K_B} w_{AB}^{\otimes n} \dd \nu(w_{AB}).
\end{equation}
Now observe that since $y_n$ is invariant under independent permutations of the copies of $K_A$ or $K_B$, we have that $(1_{A_i} \otimes 1_{B_j})(y_n) = y_{n-1}$ where $1_{A_i}$, $1_{B_j}$ denotes the partial trace over $i$\textsuperscript{th}, $j$\textsuperscript{th} copy of $K_A$, $K_B$, respectively.
It follows that we must have
\begin{equation}
y_n = \otimes_{i=1}^n (1_{A_{n + i}} \otimes 1_{B_{i}})(y_{2n})
\end{equation}
and using \eqref{eq:structureYn-deFinetti} we get
\begin{equation}
y_n = \int_{K_A \tmax K_B} (1_B(w_{AB}))^{\otimes n} \otimes (1_A(w_{AB}))^{\otimes n} \dd \nu(w_{AB}).
\end{equation}
The result follows by defining the map $T: K_A \tmax K_B \to K_A \times K_B$ as $T(w_{AB}) = (1_B(w_{AB}), 1_A(w_{AB}))$, which is continuous and hence measurable, and we get
\begin{equation}
y_n = \int_{K_A \times K_B} (x_A)^{\otimes n} \otimes (x_B)^{\otimes n} \dd T_* \nu (x_A, x_B),
\end{equation}
where $T_* \nu$ is the pushforward measure on $K_A \times K_B$ defined as $T_* \nu (A) = \nu(T^{-1}(A))$ for any measurable set $A \subset K_A \times K_B$, see \cite[Theorem C on page 163]{halmos1950measure} for a proof.
\end{proof}

To formulate the main theorem, we will again use the isomorphism $\S_n: K_A^{\tmax n} \tmax K_B^{\tmax n} \to (K_A \tmax K_B)^{\tmax n}$ defined as $\S_n((\otimes_{i=1}^n x_{i,A}) \otimes (\otimes_{i=1}^n x_{i,B}) ) = \otimes_{i=1}^n (x_{i,A} \otimes x_{i,B})$ for $x_{i,A} \in K_A$ and $y_{i,B} \in K_B$ for all $i \in \{1,\ldots,n\}$.
\begin{theorem} \label{thm:polarization}
Let $K_A$ and $K_B$ be state spaces and let $V$ be a finite-dimensional vector space. Given an affine function $f: K_A \times K_B \to V$ there are $x_A \in K_A$ and $x_B \in K_B$ such that $f(x_A, x_B) = 0$ if and only if there is a bicompatible sequence $\{y_n\}_{n \in \NN}$ such that
\begin{equation} \label{eq:polarization-fconstr}
\Pi((f \otimes f)(\S_2(y_2))) = 0
\end{equation}
where $\Pi: V \otimes V \to W$ is a linear map, where $W$ is a finite-dimensional vector space, such that $\Pi(a \otimes a + b \otimes b) = 0$ implies $a = b = 0$.
\end{theorem}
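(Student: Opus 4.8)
The plan is to prove the two implications separately. The forward direction is essentially immediate: if $x_A \in K_A$, $x_B \in K_B$ satisfy $f(x_A,x_B) = 0$, I would take the ``pure'' bicompatible sequence $y_n = x_A^{\otimes n}\otimes x_B^{\otimes n}$. Each $y_n$ is a product of states, hence lies in $K_A^{\tmax n}\tmax K_B^{\tmax n}$ and is invariant under permutations of the $A$-copies and of the $B$-copies, while the marginal condition $y_{n-1} = (1_{A_n}\otimes 1_{B_n})(y_n)$ holds because $1_A(x_A) = 1_B(x_B) = 1$. Then $\S_2(y_2) = (x_A\otimes x_B)\otimes(x_A\otimes x_B)$, so $(f\otimes f)(\S_2(y_2)) = f(x_A\otimes x_B)\otimes f(x_A\otimes x_B) = 0$, and \eqref{eq:polarization-fconstr} follows by linearity of $\Pi$.

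For the converse, suppose $\{y_n\}$ is a bicompatible sequence with $\Pi((f\otimes f)(\S_2(y_2))) = 0$. By Lemma~\ref{lemma:structureYn} there is a Borel probability measure $\mu$ on $K_A\times K_B$ with $y_n = \int x_A^{\otimes n}\otimes x_B^{\otimes n}\,\dd\mu(x_A,x_B)$ for every $n$, so in particular $\S_2(y_2) = \int (x_A\otimes x_B)\otimes(x_A\otimes x_B)\,\dd\mu$. Applying the linear maps $f\otimes f$ and then $\Pi$, and using that $\mu$ is a probability measure, the hypothesis becomes
\begin{equation*}
0 = \int_{K_A\times K_B}\Pi\big(f(x_A\otimes x_B)\otimes f(x_A\otimes x_B)\big)\,\dd\mu(x_A,x_B).
\end{equation*}
The goal is now to extract from this a single point $(x_A,x_B)$ with $f(x_A\otimes x_B) = 0$, which is exactly a feasible point for the original problem.

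This last extraction is the crux, and it is where the defining property of $\Pi$ enters: that property is what forces $v\mapsto\Pi(v\otimes v)$ to behave like a squared norm, concretely that there is a linear functional $\omega$ on $W$ for which $v\mapsto\omega(\Pi(v\otimes v))$ is a positive-definite real quadratic form on $V$ — this is the property in the form the argument needs (equivalently, after reducing $\mu$ to finitely many atoms by Carathéodory, that no sum $\sum_i\Pi(v_i\otimes v_i)$ of nonzero terms can cancel). Writing $f_x = f(x_A\otimes x_B)$ and applying $\omega$ to the displayed identity gives $\int\omega(\Pi(f_x\otimes f_x))\,\dd\mu = 0$ with a pointwise nonnegative integrand, hence $\omega(\Pi(f_x\otimes f_x)) = 0$ and therefore $f_x = 0$ for $\mu$-almost every $x$; since $\{x: f_x\ne 0\}$ is open and $\mu$-null, $\supp\mu$ lies in its complement, and any $(x_A,x_B)\in\supp\mu$ is the desired feasible point. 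I expect the main obstacle to be precisely this step — isolating and using the exact sense in which $\Pi(v\otimes v)$ is ``positive'' so as to turn a vanishing integral into pointwise vanishing of $f$; once that is settled, the remainder is linearity together with the structural description of bicompatible sequences from Lemma~\ref{lemma:structureYn}.
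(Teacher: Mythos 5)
Your overall route is the paper's: the forward direction via the product sequence $y_n = x_A^{\otimes n}\otimes x_B^{\otimes n}$ is identical, and the converse proceeds, as in the paper, by invoking Lemma~\ref{lemma:structureYn} to write $\S_2(y_2)$ as a mixture of products and then arguing that the vanishing of $\int \Pi(f_x\otimes f_x)\,\dd\mu$ forces $f_x=0$ $\mu$-almost everywhere. The only real divergence is how that positivity step is executed: the paper introduces the cone $W^+=\{\Pi(a\otimes a):a\in V\}$, argues it is pointed, and pairs the integral with every element of the (generating) dual cone, whereas you posit a single functional $\omega\in W^*$ for which $v\mapsto\omega(\Pi(v\otimes v))$ is positive definite and integrate that scalar. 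Your final localization (from ``$\mu$-a.e.''\ to an actual feasible point via $\supp\mu$) is fine and in fact more explicit than the paper's.

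The gap is your claim that the stated hypothesis on $\Pi$ --- $\Pi(a\otimes a+b\otimes b)=0\Rightarrow a=b=0$ --- already forces the existence of such an $\omega$. By the standard theorem of the alternative for the positive semidefinite cone, a functional $\omega$ with $\omega(\Pi(v\otimes v))$ positive definite exists if and only if $\ker\Pi$ contains no nonzero element of the form $\sum_{i=1}^k a_i\otimes a_i$ for \emph{any} $k$, which is exactly your parenthetical ``no sum of nonzero terms can cancel'' condition; this is strictly stronger than the two-term hypothesis. Concretely, take $V=\RR^3$ and let $\Pi$ be the quotient map $V\otimes V\to (V\otimes V)/\Span\{e_1\otimes e_1+e_2\otimes e_2+e_3\otimes e_3\}$. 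Then $\Pi(a\otimes a+b\otimes b)=0$ means $aa^T+bb^T=\lambda\,\Id_3$, which by a rank count forces $\lambda=0$ and hence $a=b=0$, so the two-term hypothesis holds; yet $\Pi(e_1\otimes e_1+e_2\otimes e_2+e_3\otimes e_3)=0$, no positive-definite $\omega$ exists, and a measure supported on three points with $f_{x_i}=e_i$ (e.g.\ $K_A$ a $2$-simplex and $f$ the identity embedding) defeats the extraction step. So you must either assume the $k$-term condition outright or supply the missing implication, which is false in general. It is worth noting that the paper's own proof hinges on the same point (the dual cone $W^{*+}$ is generating only if the \emph{convex conic hull} of $W^+$ is pointed, not merely the set $W^+$ itself), and that the concrete choices $\Pi_{\mathrm{HS}}$, $\Pi_{\mathrm{M}}$ and $\id$ all satisfy the stronger condition, so the applications are unaffected; but as a justification of the theorem under its literal hypothesis, your step does not go through.
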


\begin{proof}
One implication is clear: if there are $x_A \in K_A$ and $x_B \in K_B$ such that $f(x_A, x_B) = 0$, then take $y_n = x_A^{\otimes n} \otimes x_B^{\otimes n}$ and we have
\begin{equation}
\Pi( (f \otimes f)(\S_2(y_2))) = \Pi( f(x_A \otimes x_B) \otimes f(x_A \otimes x_B)) = 0.
\end{equation}
Now assume that $\{y_n\}_{n \in \NN}$ is a bicompatible sequence such that \eqref{eq:polarization-fconstr} holds. The result follows from Lemma~\ref{lemma:structureYn}, we have
\begin{equation} \label{eq:convex_sol}
\S_2(y_2) = \int_{K_A \times K_B} (x_A \otimes x_B)^{\otimes 2} \dd \mu_A(x_A, x_B)
\end{equation}
and we get
\begin{equation} \label{eq:polarization-integral0}
0 = \Pi( (f \otimes f)(\S_2(y_2))) = \int_{K_A \times K_B} \Pi(f^{\otimes 2})(x_A, x_B) \dd \mu_A(x_A, x_B),
\end{equation}
where we use the shorthand $\Pi(f^{\otimes 2})(x_A, x_B) =\Pi(f(x_A, x_B) \otimes f(x_A, x_B))$. The integral above is a well-defined Bochner integral, since $V$ is finite-dimensional and $f$ is affine then $\Pi(f^{\otimes 2})$ is continuous and hence Bochner measurable, and since $K_A \times K_B$ is compact and $\Pi(f^{\otimes 2})$ must be also bounded, $\Pi(f^{\otimes 2})$ is also Bochner integrable. Finally let $W^+$ denote the cone given as $W^+ = \{ \Pi(a \otimes a) : a \in V \}$. Note that $W^+$ is pointed, that is, we have $W^+ \cap (-W^+) = \{ 0 \}$, the proof is as follows: let $w \in W^+ \cap (-W^+)$, then since $w \in W^+$ we have $w = \Pi(a \otimes a)$ but since $w \in - W^+$ we have $w = - \Pi(b \otimes b)$ for some $a,b \in V$. We thus have $0 = w - w = \Pi(a \otimes a + b \otimes b)$ and we get $a = b = 0$ per the assumed properties of $\Pi$. We thus have that $w = 0$ and so $W^+$ is pointed. Finally let $W^{*+}$ denote the dual cone to $W^+$, that is $\varphi \in W^{*+}$ is a functional on $W$ such that $\varphi(\Pi(a \otimes a)) \geq 0$ for any $a \in V$. Using the linearity of the integral in \eqref{eq:polarization-integral0} we get
\begin{equation}
0 = \int_{K_A \times K_B} \varphi(\Pi(f^{\otimes 2})(x_A, x_B)) \dd \mu_A(x_A, x_B)
\end{equation}
and since $\varphi(\Pi(f^{\otimes 2})(x_A, x_B)) \geq 0$ we must have that $\mu$-a.e. $\varphi(\Pi(f^{\otimes 2})(x_A, x_B)) = 0$. Since $W^+$ is pointed, we get that $W^{*+}$ is generating \cite[Proposition B.8]{plavala2023general} and thus $\mu$-a.e. $\psi(\Pi(f^{\otimes 2})(x_A, x_B)) = 0$ for any $\psi \in W^*$. It follows that $\mu$-a.e. $\Pi(f^{\otimes 2})(x_A, x_B) = 0$ and, per our assumption, $f(x_A, x_B) = 0$. This implies that there are $(x_A, x_B) \in K_A \times K_B$ such that $f(x_A, x_B) = 0$.
\end{proof}

There are several possible choices for the map $\Pi: V \otimes V \to W$. For example, if $V = M_\ell(\RR)$ is assumed to be the vector space of $\ell \times \ell$ matrices with real entries, then one can take the Hilbert-Schmidt inner product $\Pi_{\textrm{HS}}(A \otimes B) = \Tr(A^T B)$ where $A^T$ is the transpose of $A$. To verify that $\Pi_{\textrm{HS}}$ satisfies the required property, note that $\Pi_{\textrm{HS}}(A \otimes B) = \Tr(A^T B) = \sum_{i,j=1}^\ell A_{ij} B_{ij}$ where $A_{ij}, B_{ij}$ are the elements of the matrices $A$ and $B$ respectively, and we then get
\begin{equation}
0 = \Pi_{\textrm{HS}}(A \otimes A + B \otimes B) = \sum_{i,j=1}^\ell (A_{ij})^2 + (B_{ij})^2
\end{equation}
which implies $(A_{ij})^2 = (B_{ij})^2 = 0$ and so $A = B = 0$. One can also construct the maps using the matrix product by considering $\Pi_{\textrm{M}}(A \otimes B) = A^T B$, we then get that $0 = \Pi_{\textrm{M}}(A \otimes A + B \otimes B)$ implies
\begin{equation}
0 = \sum_{j=1}^\ell A_{ij} A_{ik} + B_{ij} B_{ik}
\end{equation}
and so $A = B = 0$.
Finally, as already mentioned, one can take the identity map $\id(a \otimes b) = a \otimes b$ where $a,b \in V$ are arbitrary vectors and not necessary matrices; the identity map works since the Kronecker product $a \otimes a$ contains squares of coordinates of $a$.

Note that there are strict relations between $\id$, $\Pi_{\textrm{M}}$, and $\Pi_{\textrm{HS}}$ since $\Pi_{\textrm{M}}(A \otimes B)$ can be computed from $\id(A \otimes B)$ and $\Pi_{\textrm{HS}}(A \otimes B)$ can be computed from $\Pi_{\textrm{M}}(A \otimes B)$. One would thus expect that the hierarchy with $\Pi = \id$ would converge fastest out of the available options; in the following we will present a hierarchy that converges even faster. Once again, we will use the isomorphism $\S_n: K_A^{\tmax n} \tmax K_B^{\tmax n} \to (K_A \tmax K_B)^{\tmax n}$ defined as $\S_n((\otimes_{i=1}^n x_{i,A}) \otimes (\otimes_{i=1}^n x_{i,B}) ) = \otimes_{i=1}^n (x_{i,A} \otimes x_{i,B})$ for $x_{i,A} \in K_A$ and $y_{i,B} \in K_B$ for all $i \in \{1,\ldots,n\}$.
\begin{theorem} \label{thm:polarization++}
Let $K_A$ and $K_B$ be state spaces and let $V$ be a finite-dimensional vector space. Given an affine function $f: K_A \times K_B \to V$ there are $x_A \in K_A$ and $x_B \in K_B$ such that $f(x_A, x_B) = 0$ if and only if there is a bicompatible sequence $\{y_n\}_{n \in \NN}$ such that
\begin{equation} \label{eq:polarization++-fconstr}
(f \otimes \id^{\otimes (n-1)})(\S_n(y_n)) = 0,
\end{equation}
where $f \otimes \id^{\otimes (n-1)}: (A(K_A)^* \otimes A(K_B)^*)^{\otimes n} \to (A(K_A)^* \otimes A(K_B)^*)^{\otimes (n-1)}$.
\end{theorem}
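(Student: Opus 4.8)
The plan is to mirror the structure of the proof of Theorem~\ref{thm:polarization}, using Lemma~\ref{lemma:structureYn} as the central tool, but now exploiting the constraint \eqref{eq:polarization++-fconstr} at \emph{every} level $n$ rather than a single quadratic condition at level $2$. The easy direction is identical in spirit: if $f(x_A,x_B)=0$, set $y_n = x_A^{\otimes n}\otimes x_B^{\otimes n}$, so that $\S_n(y_n) = (x_A\otimes x_B)^{\otimes n}$ and $(f\otimes\id^{\otimes(n-1)})(\S_n(y_n)) = f(x_A\otimes x_B)\otimes(x_A\otimes x_B)^{\otimes(n-1)} = 0$; moreover this $\{y_n\}$ is manifestly bicompatible.

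For the converse, suppose $\{y_n\}_{n\in\NN}$ is bicompatible and satisfies \eqref{eq:polarization++-fconstr} for all $n$. By Lemma~\ref{lemma:structureYn} there is a Borel probability measure $\mu$ on $K_A\times K_B$ with $\S_n(y_n) = \int_{K_A\times K_B}(x_A\otimes x_B)^{\otimes n}\dd\mu(x_A,x_B)$. Applying $f\otimes\id^{\otimes(n-1)}$ and pulling it inside the (Bochner) integral gives
\begin{equation}
0 = \int_{K_A\times K_B} f(x_A\otimes x_B)\otimes (x_A\otimes x_B)^{\otimes(n-1)}\dd\mu(x_A,x_B).
\end{equation}
The key idea is then to integrate against suitable functionals on the remaining $n-1$ tensor legs. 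Fix any $v\in V$ and pick, for each point $(x_A,x_B)$ in the support of $\mu$, evaluation functionals; more systematically, for a linear functional $\psi$ on $V$ and affine functions $g_1,\dots,g_{n-1}$ on $K_A\times K_B$ (equivalently elements of $A(K_A)\otimes A(K_B)$ viewed on the diagonal), apply $\psi\otimes g_1\otimes\cdots\otimes g_{n-1}$ to obtain
\begin{equation}
0 = \int_{K_A\times K_B} \psi\big(f(x_A,x_B)\big)\, g_1(x_A,x_B)\cdots g_{n-1}(x_A,x_B)\dd\mu(x_A,x_B).
\end{equation}
Since the functions $g_i$ may be taken nonnegative and, as $n$ grows and the $g_i$ range over a separating family, their products approximate arbitrary continuous nonnegative weights on $K_A\times K_B$, one concludes that $\psi(f(x_A,x_B))=0$ $\mu$-almost everywhere for every $\psi$, hence $f=0$ $\mu$-a.e., so some feasible point exists. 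An alternative and cleaner route to the same conclusion: take two disjoint blocks of coordinates and apply $f$ to the first and $f$ to, say, another copy obtained from the symmetry of $y_n$ (using $n\ge 2$ and the permutation invariance), recovering $\int f(x_A\otimes x_B)\otimes f(x_A\otimes x_B)\dd\mu = 0$, i.e.\ exactly \eqref{eq:polarization-integral0} with $\Pi=\id$, and then invoke the pointedness argument already established in the proof of Theorem~\ref{thm:polarization} verbatim.

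The main obstacle — and the point that needs care — is showing that \eqref{eq:polarization++-fconstr} at level $n$ is genuinely \emph{not weaker} than the quadratic constraint of Theorem~\ref{thm:polarization}, and in fact is implied by it once one has a full bicompatible sequence: one must check that the de Finetti measure $\mu$ furnished by Lemma~\ref{lemma:structureYn} is the same for all $n$ (which it is, by the essential uniqueness clause), so that a single $\mu$ simultaneously represents $\S_n(y_n)$ for every $n$, and then that feeding this common $\mu$ into the level-$2$ argument suffices. Thus the level-$n$ hierarchy is \emph{complete} for every fixed $n\ge 2$; the content of the theorem is that one may equivalently phrase the feasibility certificate linearly in $y_n$ (via $f\otimes\id^{\otimes(n-1)}$) instead of quadratically (via $\Pi(f\otimes f)$), and the proof that this reformulation loses nothing is precisely the reduction just sketched — apply permutation symmetry of $y_n$ and a second partial evaluation to turn the single $f$ into an $f\otimes f$, then quote the remainder of the proof of Theorem~\ref{thm:polarization}. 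The only genuinely new verification is that the symmetrization step is legitimate, i.e.\ that $\S_n(y_n)$ being permutation-invariant over the $n$ combined copies lets us replace $\id^{\otimes(n-1)}$ acting on leg $2$ by $f$ acting on leg $2$ while the constraint still reads $0$; this follows because the measure representation \eqref{eq:convex_sol} is symmetric under permutations of the $(x_A\otimes x_B)$ factors by construction.
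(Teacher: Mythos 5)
Your ``alternative and cleaner route'' is essentially the paper's proof: from $(f\otimes\id^{\otimes(n-1)})(\S_n(y_n))=0$ one applies $f$ to a second tensor leg and $1_A\otimes 1_B$ to the remaining $n-2$ legs, which by bicompatibility yields $(f\otimes f)(\S_2(y_2))=0$, i.e.\ the constraint of Theorem~\ref{thm:polarization} with $\Pi=\id$, and one then invokes that theorem. Your first route (approximating arbitrary nonnegative weights by products of affine functions as $n\to\infty$) is unnecessary and left hand-wavy, but since you explicitly fall back on the reduction to Theorem~\ref{thm:polarization}, the proposal matches the paper's argument.
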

\begin{proof}
The one implication is again straightforward: if there are $x_A \in K_A$ and $x_B \in K_B$ such that $f(x_A, x_B) = 0$, then take $y_n = x_A^{\otimes n} \otimes x_B^{\otimes n}$ and we have
\begin{equation}
(f \otimes \id^{\otimes (n-1)})(\S_n(y_n)) = (f \otimes \id^{\otimes (n-1)})((x_A \otimes x_B)^{\otimes n}) = f(x_A, x_B) (x_A \otimes x_B)^{\otimes (n-1)} = 0.
\end{equation}
Now again assume that $\{y_n\}_{n \in \NN}$ is a bicompatible sequence such that \eqref{eq:polarization++-fconstr} holds.
Using $y_2 = (\id^{\otimes 2} \otimes (1_A \otimes 1_B)^{\otimes (n-2)})(y_n)$ we get
\begin{equation}
(f \otimes f)(y_2) = (f \otimes f \otimes (1_A \otimes 1_B)^{\otimes (n-2)})(y_n) = (f \otimes (1_A \otimes 1_B)^{\otimes (n-2)}) ( (f \otimes \id^{\otimes (n-1)})(\S_n(y_n)) ) = 0
\end{equation}
and so $\{y_n\}_{n \in \NN}$ satisfies \eqref{eq:polarization-fconstr} with $\Pi = \id$ and thus the result follows from Theorem~\ref{thm:polarization}.
\end{proof}

As we will see in the concrete application to non-negative matrix rank, the hierarchy given by Theorem~\ref{thm:polarization++} may converge significantly faster than the hierarchy given by Theorem~\ref{thm:polarization}, meaning that at the same level $n$ the hierarchy given by Theorem~\ref{thm:polarization++} gives at least as good or better results than the hierarchy given by Theorem~\ref{thm:polarization}. This is important since in practical applications we can only solve the hierarchy up to a finite level $n \in \NN$.

\subsection{Generalization}
Using the polarization hierarchy for compact convex sets, it is also possible to optimize over a more general set of problems than those already presented. In particular, let $\vec{\ell} \in \NN^{m}$ and let $f_{\vec{\ell}} : K_{A_1}^{\times \ell_1} \times \ldots \times K_{A_m}^{\times \ell_m} \to V$ and $p: K_{A_1}^{\times \ell_1} \times \ldots \times K_{A_m}^{\times \ell_m} \to \RR$ be affine functions from the product of $\ell_i$ copies of $m$ different state spaces $K_{A_i}$ to $V$ and to $\RR$ respectively. It is then possible to approximate optimization problems of the form
\begin{align}
\begin{split} \label{eq:opt-problem-gen}
p_{\text{gen}}^* = \min_{\{ x_{A_i} \in K_{A_i} \}} \quad & p( \{x_{A_i}^{\times \ell_i}\}_{i=1}^m) \\
\text{s.t.} \quad & f_{\vec{\ell}}(\{x_{A_i}^{\times \ell_i}\}_{i=1}^m)) = 0
\end{split}
\end{align}
Let $\ell = \max\{\ell_i\}$ and
\begin{equation}
y_{n,k} = (\id^{\otimes k} \otimes (\otimes_{i=1}^m 1_{K_{A_i}})^{\otimes (n-k)})(y_n)
\end{equation}
is the restriction of $y_n$ to $k$ copies of each state space. The relaxations of \eqref{eq:opt-problem-gen} corresponding to the generalization of the hierarchy from Theorem~\ref{thm:polarization} is given for $n \geq 2\ell$ by
\begin{align}
\begin{split} \label{eq:pol_hierarchy_gen}
p_{\text{gen}}^n = \min_{y_n \in K_{A_1}^{\tmax n} \tmax \ldots \tmax K_{A_m}^{\tmax n}} \quad & p(y_{n, \ell}) \\
\text{s. t.} \quad & y_n = (\otimes_{i=1}^m S_{\sigma_{A_i}})(y_n) \\
 & \Pi((f_{\vec{\ell}} \otimes f_{\vec{\ell}})(\S_{2\ell}(y_{n, 2\ell})) = 0
\end{split}
\end{align}
and similarly for the generalization of Theorem~\ref{thm:polarization++} by
\begin{align}
\begin{split} \label{eq:pol++_hierarchy_gen}
q_{\text{gen}}^n = \min_{z_n \in K_{A_1}^{\tmax n} \tmax \ldots \tmax K_{A_m}^{\tmax n}} \quad & p(z_{n, \ell}) \\
\text{s. t.} \quad & z_n = (\otimes_{i=1}^m S_{\sigma_{A_i}})(z_n) \\
 & \left( f_{\vec{\ell}} \otimes (\otimes_{i=1}^m \id^{\otimes (n-\ell_i)}) \right) (\S_n(z_n)) = 0.
\end{split}
\end{align}
Here $\S_n$ is an appropriate permutation of the tensor factors. The proof of convergence of these hierarchies then follows from exactly the same reasoning as for the case where $m=2$ and $\ell = 1$.

\subsection{Convex optimization relaxations}
To construct bicompatible sequences, we will use convex relaxations. In particular, if the local state spaces $K_A$ and $K_B$ can be described by inequalities or by LMIs, it is possible to use a linear program (LP) or semidefinite program (SDP) respectively. Consequently, the state spaces $(K_A \tmax K_B)^{\tmax n}$ can be described by an LP or SDP as well. Let $p: K_A \times K_B \to \RR$ and $f: K_A \times K_B \to V$ be affine functions as defined above. As noted before, we want to solve the following optimization problem.
\begin{align}
\begin{split} \label{eq:opt-problem}
p^* = \min_{x_A \otimes x_B \in K_A \tmax K_B} \quad & p(x_A, x_B) \\
\text{s.t.} \quad & f(x_A, x_B) = 0.
\end{split}
\end{align}
Then the relaxations corresponding to Theorem \ref{thm:polarization} are of the form
\begin{align}
\begin{split} \label{eq:pol_hierarchy}
p^n = \min_{y_n \in (K_A \tmax K_B)^{\tmax n}} \quad & p(y_{n,1}) \\
\text{s. t.} \quad & y_n = (S_{\sigma_A} \otimes S_{\sigma_B})(y_n) \\
& \Pi((f\otimes f)(\S_2(y_{n,2})) = 0.
\end{split}
\end{align}
Similarly, the relaxations corresponding to Theorem \ref{thm:polarization++} are of the form
\begin{align}
\begin{split} \label{eq:pol++_hierarchy}
q^n = \min_{z_n \in (K_A \tmax K_B)^{\tmax n}} \quad & p(z_{n,1}) \\
\text{s. t.} \quad & z_n = (S_{\sigma_A} \otimes S_{\sigma_B})(z_n) \\
 & (f \otimes \id^{\otimes (n-1)})(\S_n(z_n)) = 0.
\end{split}
\end{align}
Let $y_n^\star$ (resp.~$z_n^\star)$ be an optimal solution to the optimization problem \eqref{eq:pol_hierarchy} (resp.~\eqref{eq:pol++_hierarchy}), i.e.~a feasible point that achieves $p^n$ (resp.~$q^n)$. By construction, $y_n^\star$ defines a state in $(K_A \tmax K_B)^{\tmax n}$, however, the sequence $(y_n^\star)_n$ obtained from the different levels of the hierarchy will in general not be a bicompatible sequence: each level of the hierarchy will potentially output a completely different solution. The same is true for $(z_n^\star)_n$. The following theorem shows that it is nevertheless possible to construct a bicompatible sequence in the limit.

\begin{theorem} \label{thm:convergence}
It holds that $\lim_{n \to \infty} p^n = p^*$ and $\lim_{n \to \infty} q^n = p^*$.
\end{theorem}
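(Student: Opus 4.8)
The plan is to prove the two limits separately but along the same lines, exploiting the fact that each relaxation is a \emph{monotone} sequence of outer approximations whose feasible sets are nested and compact, so that a diagonal/compactness argument produces a limiting bicompatible sequence, at which point Theorem~\ref{thm:polarization} (resp.\ Theorem~\ref{thm:polarization++}) applies. First I would record the easy inequalities: any feasible point of the original problem \eqref{eq:opt-problem} lifts (via $x_A\otimes x_B\mapsto (x_A\otimes x_B)^{\otimes n}$) to a feasible point of \eqref{eq:pol_hierarchy} and of \eqref{eq:pol++_hierarchy} with the same objective value, so $p^n\le p^*$ and $q^n\le p^*$ for all $n$; moreover the partial trace maps a feasible $y_{n+1}$ of level $n+1$ to a feasible $y_n$ of level $n$ with the same value of $p(y_{n,1})$, whence $p^n\le p^{n+1}$ and $q^n\le q^{n+1}$. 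Thus both sequences are nondecreasing and bounded above by $p^*$, so the limits exist and satisfy $\lim_n p^n\le p^*$, $\lim_n q^n\le p^*$; it remains to prove the reverse inequalities.

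For the reverse inequality I would argue by compactness. Fix the hierarchy of Theorem~\ref{thm:polarization} (the other case is identical, replacing the quadratic constraint by the linear one of \eqref{eq:pol++_hierarchy}). Let $y_n^\star\in (K_A\tmax K_B)^{\tmax n}$ be an optimizer of level $n$, with $p(y_{n,1}^\star)=p^n$. Each $y_n^\star$ restricts, via the partial traces, to a family $\{y_{n,k}^\star\}_{k\le n}$ with $y_{n,k}^\star\in (K_A\tmax K_B)^{\tmax k}$, all lying in fixed compact sets. By a diagonal argument: pass to a subsequence along which $y_{n,1}^\star$ converges in $(K_A\tmax K_B)$; refine so that $y_{n,2}^\star$ converges in $(K_A\tmax K_B)^{\tmax 2}$; continuing, and taking the diagonal subsequence, we obtain limits $\tilde y_k:=\lim_n y_{n,k}^\star$ for every $k\in\NN$. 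Each $\tilde y_k$ lies in the (closed) set $(K_A\tmax K_B)^{\tmax k}$; the symmetry constraint $y_n=(S_{\sigma_A}\otimes S_{\sigma_B})(y_n)$ passes to the limit because $S_{\sigma_A}\otimes S_{\sigma_B}$ is continuous; the compatibility $(1_A\otimes 1_B)$-relations pass to the limit likewise; and the constraint $\Pi((f\otimes f)(\S_2(\tilde y_2)))=0$ holds because $\Pi$, $f\otimes f$ and $\S_2$ are continuous linear maps and the constraint holds for every $y_n^\star$ with $n\ge 2$. Hence $\{\tilde y_k\}_{k\in\NN}$ (after undoing $\S_k$) is a bicompatible sequence satisfying \eqref{eq:polarization-fconstr}, so Theorem~\ref{thm:polarization} gives $x_A\in K_A$, $x_B\in K_B$ with $f(x_A,x_B)=0$, i.e.\ \eqref{eq:opt-problem} is feasible. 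Finally, continuity of $p$ gives $p(\tilde y_1)=\lim_n p(y_{n,1}^\star)=\lim_n p^n$; but one must now connect $p(\tilde y_1)$ to $p^*$. Since $\tilde y_1\in K_A\tmax K_B$ need not be separable, $p(\tilde y_1)$ is only a lower bound on the separable optimum, not obviously equal to $p^*$; this is the crux.

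To close this gap I would apply the de~Finetti structure directly to $\{\tilde y_k\}$: by Lemma~\ref{lemma:structureYn} there is a measure $\mu\in\Prob(K_A\times K_B)$ with $\S_k(\tilde y_k)=\int (x_A\otimes x_B)^{\otimes k}\dd\mu$; in particular $\tilde y_1=\int x_A\otimes x_B\,\dd\mu$, and the argument in the proof of Theorem~\ref{thm:polarization} shows $\mu$ is supported on $\{f=0\}$. Therefore $p(\tilde y_1)=\int p(x_A,x_B)\,\dd\mu\ge p^*$, since every point in $\supp\mu$ is feasible for \eqref{eq:opt-problem}. Combining, $p^*\le p(\tilde y_1)=\lim_n p^n\le p^*$, so $\lim_n p^n=p^*$; and the same chain with $z_n^\star$ and \eqref{eq:pol++_hierarchy}, using Theorem~\ref{thm:polarization++} in place of Theorem~\ref{thm:polarization}, yields $\lim_n q^n=p^*$.

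\textbf{Main obstacle.} The routine part is the monotonicity and the compactness/diagonal extraction. The subtle point — and the one I would write out carefully — is the last step: that the limiting point $\tilde y_1$ (a potentially \emph{non-separable} state of $K_A\tmax K_B$) nevertheless has objective value $\ge p^*$. This is precisely where the de~Finetti theorem is indispensable: it disintegrates $\tilde y_1$ into a mixture of product states each of which is \emph{feasible} for the original (separable) problem, so the apparent slack between the maximal and minimal tensor products collapses in the limit. A secondary point worth a sentence is measurability/well-definedness of the Bochner integral $\int p(x_A,x_B)\,\dd\mu$, which is immediate since $p$ is affine on a compact set, hence continuous and bounded.
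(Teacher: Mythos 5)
Your proposal is correct and follows essentially the same route as the paper: relaxation gives $p^n\le p^*$, a compactness extraction yields an infinitely extendable limit point that defines a bicompatible sequence satisfying the polarized constraint, and Theorem~\ref{thm:polarization} (via the de~Finetti disintegration into $\mu$-a.e.\ feasible product states) gives the reverse inequality. The only differences are cosmetic — you build the whole tower $\{\tilde y_k\}$ by a diagonal argument where the paper extracts only $x_2^\star$ and invokes $\Ext_\infty(K_A\tmax K_B,2)$, and you are somewhat more explicit than the paper about monotonicity of $p^n$, which is what upgrades the subsequential limit to convergence of the full sequence.
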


To prove this Theorem, we will first prove Lemma \ref{lem:compact_Ext} below, for which we introduce the following notation. Using the notation of Ref.~\cite{aubrun2022monogamy}, for any state space $K$ we denote by $\gamma_K^{n,k}: (A(K)^*)^{\otimes n} \to (A(K)^*)^{\otimes k}$ the map that symmetrically traces out all but $k$ copies of the ambient vector space of the state space, given by
\begin{align}
\gamma_K^{n,k}
 & = \id_{K}^{\otimes k} \otimes 1_{K}^{\otimes (n-k)} \circ P_{\Sym_n(A(K)^*)},
\end{align}
where
\begin{align}
P_{\Sym_n(A(K)^*)} = \frac{1}{n!} \sum_{\sigma \in S_n} S_\sigma
\end{align}
is the projection onto the symmetric subspace of $(A(K)^*)^{\otimes n}$. A state $x \in K$ is called $n$-extendable if there exists a state $y \in K^{\tmax n}$ such that $x = \gamma_K^{n,1}(y)$. We denote by $\Ext_n(K,k)$ the set of symmetric $k$-th extensions of $n$-extendable states of $K$. That is
\begin{align}
\Ext_n(K,k) := \gamma_K^{n,k}(K^{\tmax n}).
\end{align}

\begin{lemma} \label{lem:compact_Ext}
$\Ext_n(K, k)$ is a compact subset of $K^{\tmax k}$.
\end{lemma}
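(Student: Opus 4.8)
The plan is to show that $\Ext_n(K,k)$ is the continuous image of a compact set, hence compact, and then check that it actually lands in $K^{\tmax k}$. The starting observation is that $K^{\tmax n}$ is itself compact: it is defined as a closed subset (cut out by the countably many closed inequalities $\varphi(f_{i_1}\otimes\cdots\otimes f_{i_n})\ge 0$ for positive $f_{i_j}\in A(K)^+$, plus the normalization $\varphi(1_K^{\otimes n})=1$) of the finite-dimensional vector space $(A(K)^*)^{\otimes n}$, and it is bounded because normalization together with positivity pins down every coordinate in a fixed box (this is the standard fact that state spaces and their max-tensor powers are compact; see the construction in \cite{plavala2023general}). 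So $K^{\tmax n}$ is compact.

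Next, I would note that the map $\gamma_K^{n,k}$ is linear on the finite-dimensional space $(A(K)^*)^{\otimes n}$: it is the composition of the symmetrizing projection $P_{\Sym_n(A(K)^*)}=\frac{1}{n!}\sum_{\sigma\in S_n}S_\sigma$ with the linear partial-trace-type map $\id_K^{\otimes k}\otimes 1_K^{\otimes(n-k)}$. A linear map between finite-dimensional normed spaces is continuous, so $\gamma_K^{n,k}$ is continuous. Therefore $\Ext_n(K,k)=\gamma_K^{n,k}(K^{\tmax n})$ is the image of a compact set under a continuous map, and is compact.

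It remains to verify the inclusion $\Ext_n(K,k)\subseteq K^{\tmax k}$. Take any $y\in K^{\tmax n}$ and set $w=\gamma_K^{n,k}(y)$. First, $P_{\Sym_n(A(K)^*)}(y)$ still lies in $K^{\tmax n}$, because $K^{\tmax n}$ is convex (an intersection of half-spaces and a hyperplane) and invariant under each tensor-factor permutation $S_\sigma$ (permuting the $f_{i_j}$ permutes the defining inequalities), so the average over $S_n$ stays in $K^{\tmax n}$. Then I would check that applying $\id_K^{\otimes k}\otimes 1_K^{\otimes(n-k)}$ maps $K^{\tmax n}$ into $K^{\tmax k}$: given any positive $g_1,\dots,g_k\in A(K)^+$, one has $w(g_1\otimes\cdots\otimes g_k)=y\bigl(g_1\otimes\cdots\otimes g_k\otimes 1_K\otimes\cdots\otimes 1_K\bigr)\ge 0$ since $1_K\in A(K)^+$ and $y\in K^{\tmax n}$; and $w(1_K^{\otimes k})=y(1_K^{\otimes n})=1$, giving normalization. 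Hence $w\in K^{\tmax k}$, as desired.

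The only mild subtlety — and the one point I would be most careful about — is making sure the two "obvious" closure properties of $K^{\tmax n}$ that the argument leans on are genuinely available: that $K^{\tmax n}$ is compact (in particular bounded), and that it is both convex and permutation-invariant so that the symmetrization step does not leave it. Both follow directly from the explicit inequality description of $K^{\tmax n}$ given in Section~\ref{sec:polarization_hierarchy} together with the finite-dimensionality assumption, so there is no real obstacle; the rest is just "continuous image of compact is compact." I would state it in essentially the three steps above.
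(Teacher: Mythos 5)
Your proof is correct and follows essentially the same route as the paper's: continuity of the linear maps $P_{\Sym_n(A(K)^*)}$ and $\id_K^{\otimes k}\otimes 1_K^{\otimes(n-k)}$ on a finite-dimensional space, compactness of $K^{\tmax n}$, and the observation that the image lands in $K^{\tmax k}$. You simply spell out the two closure properties (permutation-invariance plus convexity, and the positivity/normalization check for the partial trace) that the paper asserts without detail, which is a welcome addition but not a different argument.
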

\begin{proof}
As linear maps between finite dimensional vector spaces, both the maps $P_{\Sym_n(A(K)^*)}$ and $\id^{\otimes k} \otimes 1_K^{\otimes (n-k)}$ are continuous. Since each $K^{\tmax n}$ is compact, it then follows that $\Ext_n(K,k) = \gamma_K^{n,k}(K^{\tmax n})$ is compact as well. Furthermore, since $P_{\Sym_n(A(K)^*)}(K^{\tmax n}) \subset K^{\tmax n}$ and $\id_{K}^{\otimes k} \otimes 1_{K}^{\otimes (n-k)}: K^{\tmax n} \to K^{\tmax k}$, it holds that $\gamma_K^{n,k}(K^{\tmax n})$ is indeed a subset of $K^{\tmax k}$.
\end{proof}

Lastly, we define the set of $k$-th extensions of infinitely extendable states as
\begin{align}
\Ext_\infty(K,k) := \bigcap_{n\geq k} \Ext_n(K,k),
\end{align}
which, as a countable intersection of compact sets, is also compact. We are now ready to prove Theorem \ref{thm:convergence}.

\begin{proof}[Proof (of Theorem \ref{thm:convergence}).]
Since each $p^n$ corresponds to a relaxation, it holds that $p^n \leq p^*$.

For the converse, let $\{y_n^\star\}_{n \in \NN}$ be the sequence of optimal points for the hierarchy of optimization problems \eqref{eq:pol_hierarchy}. For each $n$, the restriction of $y^\star_n$ to two copies of the state spaces $K_A$ and $K_B$ is a state $y^\star_{n,2} \in \Ext_n(K_A \tmax K_B,2) \subset (K_A \tmax K_B)^{\tmax 2}$. By compactness of the state space $(K_A \tmax K_B)^{\tmax 2}$, there is a subsequence of $\{y^\star_{n,2}\}_{n \in \NN}$ that converges to a point $x_2^\star \in (K_A \tmax K_B)^{\tmax 2}$. It then holds that $x_2^\star \in \Ext_\infty(K_A \tmax K_B,2)$. That is, $x_2^\star$ is infinitely extendable and can thus be identified with a bicompatible sequence $\{x_n^\star\}_{n \in \NN}$. Additionally, since each $y_{n,2}^\star$ obeys Eqs.~\eqref{eq:polarization-fconstr}, so does $x_n^\star$. Hence Theorem \ref{thm:polarization} applies and it follows that 
\begin{align}
\S_2(x_2^\star) = \int_{K_A \times K_B} (x_A \otimes x_B)^{\otimes 2} \dd \mu_A(x_A, x_B),
\end{align}
where $x_A$, $x_B$ obey $f(x_A, x_B) = 0$ almost everywhere with respect to $\mu_A$. That is, each such $x_A, x_B$ defines a feasible point of the optimization problem so that $p(x_A, x_B) \geq p^*$ a.e. w.r.t. $\mu_A$.
From this, we conclude that $\lim_n p^n = p(x_2^*) \geq p^\star$.

For the hierarchy \eqref{eq:pol++_hierarchy} it is sufficient to note that the constraint~\eqref{eq:polarization++-fconstr} implies the constraint \eqref{eq:polarization-fconstr}, as was shown in the proof of Theorem \ref{thm:polarization++}. The convergence of \eqref{eq:pol_hierarchy} thus implies convergence of the hierarchy \eqref{eq:pol++_hierarchy}.
\end{proof}

\subsection{Comparison to other methods}
Here, we briefly outline the differences and similarities of the polarization hierarchy with respect to some of the established polynomial optimization hierarchies in the literature. An important difference between the method presented here and most previous methods for polynomial optimization is the use of a de Finetti theorem as a proof technique. Such theorems are commonly seen in quantum information theory papers (see e.g.~Refs.~\cite{doherty2004complete,christandl2007one,brandao2017quantum}), but are used much less in the context of (polynomial) optimization, despite the fact that the first formulation of such a theorem was in terms of (classical) probability distributions \cite{de1929funzione, diaconis1980finite}. In recent years, optimization methods that rely on de Finetti theorems have appeared regularly in the quantum information community \cite{doherty2004complete,navascues2020inflation,wolfe2021quantum,ligthart2022inflation,de2023complete,ligthart2023convergent}.

In commutative polynomial optimization, one of the most well-known methods of solving polynomial optimization problems for compact convex sets is given by the Lasserre hierarchy \cite{lasserre2001global,de2011lasserre,nie2014optimality}. This method systematically provides a convergent hierarchy of semidefinite programs as outer approximations to the polynomial optimization problem one is interested in. More specifically, one constructs a so-called moment matrix and requires it to be positive semidefinite. At finite levels of the hierarchy, this effectively enforces a pseudo probability measure over valid assignments of values to the variables. That is, at level $n$ of the hierarchy the $n$-th order moments of the measure obey all the required constraints.

In contrast, our method does not intrinsically use positive semidefinite constraints. It depends on the shape of the state space which type of hierarchy is implemented: for polyhedra, one gets an LP hierarchy, for spectrahedra an SDP hierarchy. In particular for polyhedra, we thus obtain a convergent LP hierarchy instead of a convergent SDP hierarchy. This has the advantage that LPs can be implemented for a larger number of variables and constraints than SDPs, and higher levels of the hierarchy can therefore potentially be reached. State spaces that are not described by linear matrix inequalities, however, are in general more difficult to implement using our hierarchy.

The generalization of the Lasserre hierarchy to non-commutative polynomials was derived in Ref.~\cite{pironio2010convergent} and is known as the NPA hierarchy. This hierarchy was constructed to allow for optimization over states and representations of $C^*$-algebras. As a consequence the dimension of the quantum states is not fixed. Importantly, the NPA hierarchy only allows for expressions that are polynomial in the operators, but linear in the expectation values of such operators. Recently, two hierarchies have been proposed to also allow for optimization over polynomials in the expectation values: The state polynomial optimization hierarchy \cite{klep2023state}, and the quantum polarization hierarchy that this paper is in part inspired by \cite{ligthart2022inflation,ligthart2023convergent}. The approach presented in this paper differs in the sense that one optimizes explicitly over a finite-dimensional state space, and does not regard an algebra of operators directly. Polynomials of expectation values in observables enter the description as functions on the finite-dimensional state space through the polynomials $p$ and $f$. Thus, instead of optimizing over states on a $C^*$-algebra, the optimization runs over all states of a fixed (finite) dimensional state space.

A second method that is commonly used to approximate polynomial optimization problems is the Sherali-Adams hierarchy \cite{sherali1990hierarchy, sherali1992global}. This hierarchy makes use of the so-called Reformulation-Linearisation Technique (RLT) to build a hierarchy of LP approximations, which is very closely related to the polarization technique in the case of polyhedra. Using the notation of this paper, the original S-A hierarchy is specialized to optimization problems of the form
\begin{align}
\min_{x \in \RR^N} \quad & p(x) \\
 & f(x) \geq 0 \\
 & 0 \leq l_j \leq x_j \leq u_j < \infty. \label{eq:sherali-adams-cone-constraints}
\end{align}
The constraint \eqref{eq:sherali-adams-cone-constraints} can be interpreted in our language to define a polyhedral state space. At level $n$ of the hierarchy of relaxations, one then defines new variables $X_J$ for $J \subset [N]$ for all subsets with repetitions $J$ of size $n$, which are intended to mimic the monomials $\prod_{i \in J} x_i$. Additionally, one takes all the products of the constraints \eqref{eq:sherali-adams-cone-constraints}, which is equivalent to taking the maximal tensor product of the state space. At high enough level of the hierarchy it then becomes possible to implement the polynomial constraints $f(x) \geq 0$ and to optimize over $p$, in a similar way as for the polarization hierarchy. It is then proven that this hierarchy converges to the \emph{convex hull relaxation} of the original optimization problem, in which one only requires that a convex combination of assignments to the variables fulfill all the constraints (compare with e.g.~Eq.~\eqref{eq:convex_sol}, where, a priori, a similar result is obtained, but is later resolved). The notable difference is that, in the polarization hierarchy presented here, the polynomial constraints are squared, which guarantees convergence to the optimal value of the original optimization problem, and not to its convex hull relaxation. Additionally, the polarization hierarchy is not restricted to state spaces of the form \eqref{eq:sherali-adams-cone-constraints}, though it should be noted that there have been many adaptations of the S-A hierarchy, which also include semidefinite optimization problems \cite{sherali2002enhancing} and non-linear optimization problems \cite{zhen2021extension}.

From a slightly different viewpoint, the polarization hierarchy can also be seen as an adaptation and generalization of the Doherty-Parrilo-Spedalieri (DPS) hierarchy \cite{doherty2004complete,aubrun2022monogamy} to more general optimization problems. The DPS hierarchy is often used in quantum information theory to provide a relaxation to the set of \emph{separable quantum states}, i.e.~convex mixtures of product states in finite dimensional quantum theory. The polarization hierarchy instead provides the opportunity to optimize over approximations to product states, as opposed to separable states; the same advantage that arises in the comparison with the S-A hierarchy. Additionally, the DPS hierarchy is most often used in combination with optimization problems that are linear in the state (e.g.~in \cite{brandao2005quantifying,cavalcanti2015detection,de2023complete}), while the polarization hierarchy allows for general polynomial expressions.

Finally, it should be noted that the polarization hierarchy currently does not include polynomial \emph{inequality} constraints, while both the Lasserre and Sherali-Adams hierarchies do. It is not difficult to construct relaxations that also include such inequality constraints, though it is not clear whether these relaxations converge to the correct optimal value. Similar to the Sherali-Adams hierarchy, however, convergence to the convex hull relaxation of the problem follows straightforwardly from the proofs of Theorems \ref{thm:polarization} and \ref{thm:convergence}. We leave a construction and proof of convergence for a hierarchy with inequalities open for future research.

\section{Application to nonnegative matrix rank and nested rectangles problem}
The nonnegative matrix rank is used in combinatorial optimization, but also in quantum theory and statistics and other applications, see \cite{cohen1993nonnegative} for a review. First of all, note that $A$ and also $L$ and $R$ can, without loss of generality, be considered to be left-stochastic matrices, which is important since the set of left-stochastic matrices is compact and convex. We call a matrix $X$ left-stochastic if $X_{ij} \geq 0$ for all $i,j$ and $\sum_i X_{ij} = 1$ for all $j$. Without loss of generality assume that $A$ does not contain any rows and columns where all entries would be identically zero. There is a diagonal $m \times m$ matrix $D_A$ with non-negative entries that normalizes the columns of $A$, that is, such that $\tilde{A} = A D_A$ is left-stochastic, and, analogously, there is a diagonal $k \times k$ matrix $D_L$ with non-negative entries that normalizes the columns of $L$, that is, such that $\tilde{L} = L D_L$ is left-stochastic. We get $\tilde{A} = \tilde{L} D_L^{-1} R D_A$ and denoting $\tilde{R} = D_L^{-1} R D_A$ we get the factorization $\tilde{A} = \tilde{L} \tilde{R}$, note that $\tilde{R}$ has the same dimensions as $R$. It follows that $\tilde{R}$ is left-stochastic as we have $1 = \sum_{i=1}^n \tilde{A}_{ij} = \sum_{i=1}^n \sum_{\ell=1}^k \tilde{L}_{i \ell} \tilde{R}_{\ell j} = \sum_{\ell=1}^k \tilde{R}_{\ell j}$ where we have used both the left-stochasticity of $\tilde{A}$ and $\tilde{L}$. Moreover, if $\tilde{A} = L' R'$ is any factorization of $\tilde{L}$, then $A = L' (R' D_A^{-1})$ is a factorization of $A$.

If we view the matrices $L$ as being part of a state space $K_A$ of left-stochastic matrices of dimensions $n \times k$, and similarly consider the matrices $R$ as part of a state space $K_B$ of left-stochastic matrices of dimensions $k \times m$, it is readily seen that the matrix multiplication $A = LR$ can be thought of as an affine map $f: K_A \times K_B \to V$. A convenient toy example for the nonnegative matrix rank problem is given by the nested rectangles problem \cite{gribling2019lower}. It asks, given a square $P = [-1, 1]^2$, for which values of $a$ and $b$ there exists a triangle $T$, such that
\begin{align*}
[-a, a] \times [-b, b] \subseteq T \subseteq P.
\end{align*}
It turns out that this question is equivalent to determining whether the matrix given by
\begin{align} \label{eq:Mdef}
M = \frac{1}{4}
\begin{pmatrix}
1-a & 1+a & 1-b & 1+b \\
1+a & 1-a & 1-b & 1+b \\
1+a & 1-a & 1+b & 1-b \\
1-a & 1+a & 1+b & 1-b
\end{pmatrix}
\end{align}
has nonnegative rank equal to 3 \cite{gribling2019lower}. It is known that such a nested triangle $T$ exists if and only if $(1+a)(1+b) \leq 2$ \cite{fawzi2015lower}. This problem can therefore give an indication for the quality of a bound provided by a relaxation, by comparing it to this known analytic bound.

We aim to show the (non-)existence of two element-wise nonnegative, left-stochastic matrices $U \in \mathbbm{M}^{4 \times 3}$ and $V \in \mathbbm{M}^{3 \times 4}$ such that $M = UV$. The matrix elements $U_{ij}, V_{ij}$ will form the bases of the state spaces. To every state space we will add a ``unit'' dimension, denoted by $\Id$, which will signal when the system has been traced out. Additionally, we will use the left-stochasticity to remove a dimension for every column in the matrices. The effective local dimensions of $K_A$ and $K_B$ are therefore $d_A=12 + 1 - 3 =10$ and $d_B = 12 + 1 - 4 = 9$ respectively and the equality constraints for left-stochasticity are replaced by inequalities of the form
\begin{align}
\Id - \sum_{i=1}^{3} U_{ij} & \geq 0, \label{eq:Ustoc} \\
\Id - \sum_{i=1}^{2} V_{ij} & \geq 0. \label{eq:Vstoc}
\end{align}

Subsequently, we construct variables $y^n_{w}$, labeled in the words constructed from the basis elements of $K_A \otimes K_B$ of length $\leq n$ at level $n$ of the hierarchy, with $\Id$ corresponding to the empty word. For example, for level $n=2$ of the hierarchy one such variable could be $y^2_{U_{ij} V_{kl} \Id \Id} =: y^2_{U_{ij} V_{kl}}$. In order to significantly reduce the size of the LP hierarchy at every step, we will encode the problem on the symmetric subspace. That is, we directly impose the symmetry constraints of Eq.~\eqref{eq:symmetry} to reduce the number of variables to $\binom{10 + n -1}{n} \cdot \binom{9 + n - 1}{n}$, as opposed to the $(10 \cdot 9)^n$ variables that follow from the tensor product description. This has the added benefit of removing all the symmetry constraints from the LP description as well. We therefore only have to look at the variables $y^n_{[w]}$, where $[w]$ denotes the equivalence class of the word $w$ under the symmetry conditions. The LP implementing the hierarchy of Eq.~\eqref{eq:pol++_hierarchy} then has the following form
\begin{align}
\begin{split} \label{eq:rectangle_LP}
\min_{y^n_{[w]}}\quad & 0 \\
\text{s. t.} \qquad & y^n_{[\Id]} = 1 \\
 & 0 \leq y^n_{[w]} \leq 1 \quad \forall [w]: \abs{[w]} \leq 2n \\
 & y^n_{[w]} - \sum_{i=1}^3 y^n_{[U_{ij} w]} \geq 0 \quad \forall [w]: \abs{[w]} \leq 2n-1 \\
 & y^n_{[w]} - \sum_{i=1}^2 y^n_{[V_{ij} w]} \geq 0 \quad \forall [w]: \abs{[w]} \leq 2n-1 \\
 & M_{ij} \cdot y^n_{[w]} = \sum_{j=1}^3 y^n_{[U_{ij} V_{jk} w]} \quad \forall [w]: \abs{[w]} \leq 2n-2, \ \forall i,j \in [4].
\end{split}
\end{align}
Note that the last constraint implicitly rewrites the removed variables $U_{4j}$ and $V_{3k}$ into their canonical forms using Eqs.~\eqref{eq:Ustoc} and \eqref{eq:Vstoc}.

Infeasibility of this LP witnesses incompatibility of the matrix $M$ with a rank-3 decomposition into non-negative matrices, and therefore answers the nested rectangle problem in the negative for those values of $a$ and $b$. We have implemented this program for $n=3$ and compared it with existing methods \cite{gribling2019lower, fawzi2015lower, fawzi2016self}. The results can be seen in Fig.~\ref{fig:nested_rectangles}, the code used to generate this data is publically available \cite{laurens_github}. By running the LP for various values of $a$ and $b$ and using a bisection method, we determined the region in which the hierarchy \eqref{eq:rectangle_LP} can detect infeasibility at level $n=3$, which is denoted in Fig.~\ref{fig:nested_rectangles} by the blue area. We also implemented the weaker hierarchy corresponding to Theorem~\ref{thm:polarization}, which could detect infeasibility for all values of $a$ and $b$ in the red area. 
Our method outperforms the ones of Refs.~\cite{gribling2019lower, fawzi2016self}, depicted in \cite[Fig.~1]{gribling2019lower}, indicating relatively fast convergence for this particular problem. It should, however, be noted that the polarization hierarchy quickly becomes impractical for state spaces with larger dimensions and for higher levels of the hierarchy.

\begin{figure}
\centering
\includegraphics[width=0.5\linewidth]{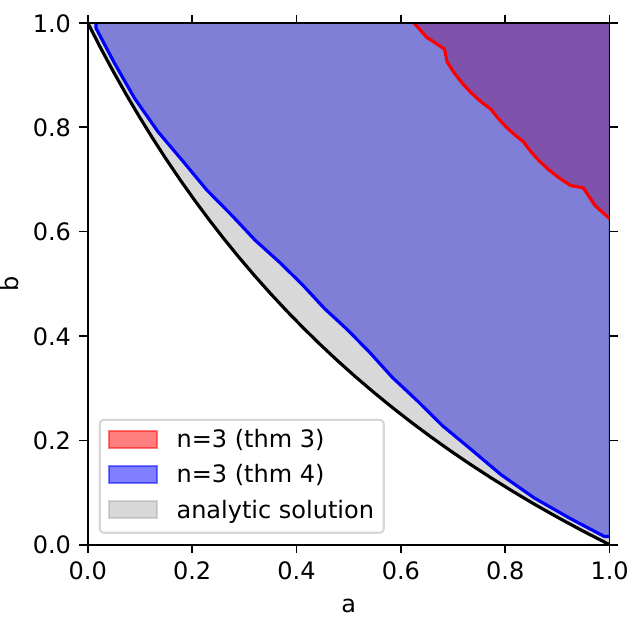}
\caption{Application of the polarization hierarchies given by Theorem~\ref{thm:polarization} and Theorem~\ref{thm:polarization++} for $n=3$ to the nested rectangles problem. It is known analytically that the matrix $M$ given by \eqref{eq:Mdef} does not have rank-3 decomposition within the gray region. This problem is used to compare the convergence of the polarization hierarchy to other methods. The red region is where rank-3 decomposition is ruled out by the hierarchy given by Theorem~\ref{thm:polarization} for $n=3$ and the blue region is where a rank-3 decomposition is ruled out by the hierarchy given by Theorem~\ref{thm:polarization++} for $n=3$.}
\label{fig:nested_rectangles}
\end{figure}

\begin{acknowledgments}
We are thankful to Sander Gribling for discussions.
MP acknowledges support from the Deutsche Forschungsgemeinschaft (DFG, German Research Foundation, project numbers 447948357 and 440958198), the Sino-German Center for Research Promotion (Project M-0294), the German Ministry of Education and Research (Project QuKuK, BMBF Grant No. 16KIS1618K), the DAAD, and the Alexander von Humboldt Foundation.
The work of LL and DG has been supported by
the Bundesministerium für Bildung und Forschung -- BMBF under projects QuBRA and ProvideQ, as well as
Germany's Excellence Strategy – Cluster of Excellence Matter and Light for Quantum Computing (ML4Q) EXC 2004/1 (390534769).
\end{acknowledgments}

\bibliography{citations}

\end{document}